\newcommand\C{\mathbb{C}}
\newcommand\Q{\mathbb{Q}}
\newcommand\R{\mathbb{R}}
\DeclareMathOperator\trace{trace}
\theoremstyle{plain}
\newtheorem{Thm}{Theorem}
\newtheorem*{Thm*}{Theorem}
\newtheorem*{Prop*}{Proposition}
\newtheorem*{Cor*}{Corollary}
\newtheorem*{Lemma*}{Lemma}
\newtheorem*{Conjecture*}{Conjecture}
\theoremstyle{definition}
\newtheorem{Example}[Thm]{Example}
\newtheorem*{Constr*}{Construction}
\newtheorem*{Def*}{Definition}
\newtheorem*{Example*}{Example}
\newtheorem*{Remark*}{Remark}
\begin{document}
\title{A small frame and a certificate of its injectivity}

\author{
\IEEEauthorblockN{Cynthia Vinzant}
\IEEEauthorblockA{
Department of Mathematics\\
North Carolina State University \\
Raleigh, North Carolina 27695, USA\\
Email: clvinzan@ncsu.edu}}

\maketitle

{\abstract 
We present a  complex frame of eleven vectors in 4-space and prove that it defines injective measurements. 
That is, any rank-one $4\times 4$ Hermitian matrix is uniquely determined by its values as a Hermitian form on this collection of eleven vectors. 
This disproves a recent conjecture of Bandeira, Cahill, Mixon, and Nelson. We use algebraic computations and certificates 
in order to prove injectivity. 
}

A finite-dimensional \emph{frame} is a collection of vectors $ \Phi = (\phi_1, \hdots, \phi_n)$ that span 
$\C^d$ and  define measurements 
\[|\langle\phi_k, x \rangle|^2 \ = \ \trace(\phi_k^{\;}\phi_k^*xx^*) \ \in \ \R_{\geq 0}\]
 on a signal $x\in \C^d$. 
The problem of \emph{phase retrieval} is to recover the rank-one matrix $xx^*$ from these measurements.  
This relates to low-rank matrix completion and has many imaging-related applications: microscopy, optics, 
and diffraction imaging, among others. 

Phase retrieval is an old problem in signal processing and there are
many interesting and important questions on the topic, including 
how one recovers the matrix $xx^*$ and the stability of this recovery \cite{CESV:13}.
This paper focusses on the question of when that recovery is possible, \emph{i.e.}
when the measurements $ \trace(\phi_k^{\;}\phi_k^*xx^*)$ uniquely determine
the rank-one matrix $xx^*$. 

We say that a frame $\Phi$ \textbf{defines injective measurements} on $\C^d$ 
if the linear map from $d\times d$ Hermitian matrices to $\R^n$ defined by 
$Q \mapsto (\phi_1^*Q\phi_1^{\;}, \hdots, \phi_n^*Q\phi_n^{\;})$
is injective on the set of rank-one Hermitian matrices. That is, $\Phi$ defines injective measurements 
if $xx^* = yy^*$ for any two vectors $x,y \in \C^d$ that have equal measurements 
$\phi_k^*xx^*\phi_k^{\;}= \phi_k^*yy^*\phi_k^{\;}$ for all $k=1, \hdots, n$.

In the context of finite frame theory, the problem of injective measurements was first studied by
Balan, Casazza, and Edidin \cite{BCE:06}, who show when $n\geq 4d-2$ a generic frame defines injective measurements.
In the other direction, 
Heinosaari, Mazzarella and Wolf prove that $n\geq(4 + o(1))d$ is necessary for $\Phi$ to define injective measurements. 
Specifically  \cite[Theorem~6]{HMW:13}, they use embedding theorems in homotopy theory to show that one needs 
$n >  4d-4-2\alpha$ where $\alpha$ is the number of $1$'s in the 
binary expansion of $d-1$. 

Bandeira, Cahill, Mixon, and Nelson  \cite{BCMN:13} conjectured that 
fewer than $4d-4$ measurements cannot be injective on $\C^d$, whereas $4d-4$ generic
measurements are injective. Recently, the latter part of this was proved by  
Conca, Edidin, Hering, and the current author \cite{CEHV:14}, who show that 
when $n\geq 4d-4$ a generic frame $\Phi \in \C^{d\times n}$ defines injective measurements on $\C^d$.
They also show that in the case $d=2^k+1$ and $n < 4d-4$, any frame does not define injective measurements.

In this paper, we present a counterexample to the first part of this conjecture in the smallest open case, $d=4$. 
In Section~\ref{sec:ex}, we give a frame consisting of $11 = 4d-5$ vectors in $\C^4$ and 
prove that it defines injective measurements on $\C^4$.  
The example was found via a random search and the proof is computational, using certificates in algebraic 
and real algebraic geometry. 
We hope that this will spur the search for more systematic counterexamples that may extend to $d>4$. 

%
%

\section{Translation to polynomials}\label{sec:translate}

Following the set up of \cite{CEHV:14}, we translate the injectivity of measurements into 
a condition on the solutions of a system of polynomial equations. 

A useful step 
is the following reformulation of injectivity by Bandeira \textit{et. al.} \cite[Lemma 9]{BCMN:13}. 
They observe that a frame $\Phi$ defines 
injective measurements if and only if the linear space 
\[ \mathcal{L}_{\Phi}  =  \{ Q \in \C^{d\times d} \; :\; \phi_1^*Q\phi_1^{\;}=\hdots= \phi_n^*Q\phi_n^{\;} =0\}  \]
does not contain any non-zero Hermitian matrices of rank $\leq 2$. 
The existence of rank $\leq 2$ Hermitian matrices in $\mathcal{L}_{\Phi}$ can be 
rephrased as the existence of real roots of a certain system of polynomial equations as follows.

Any $4\times 4$ Hermitian matrix can be written as $Q =$
\[
\begin{pmatrix}
x_{11} & x_{12} + i y_{12} & x_{13} + i y_{13} & x_{14} + i y_{14} \\
x_{12} - i y_{12} & x_{22} & x_{23}+i y_{23} & x_{24}+i y_{24} \\
x_{13} - i y_{13} & x_{23} - i y_{23} & x_{33} & x_{34}+i y_{34} \\
x_{14} - i y_{14} & x_{24} - i y_{24} & x_{34} - i y_{34} & x_{44}
\end{pmatrix},
\]
where $x_{11}, \hdots, y_{34}$ are real numbers. We will write our polynomial condition for injectivity
in the 16 variables $x_{jk}, y_{jk}$.   
For $1\leq  j,k \leq 4$, let $m_{jk}$ denote the determinant of the $3\times 3$ 
matrix obtained by removing the $j$th row and $k$th column from the matrix $Q$. 
The matrix $Q$ has rank $\leq 2$ when all these minors $m_{jk}$ equal zero.

Given a frame $\Phi = (\phi_1, \hdots, \phi_n) \in \C^{4\times n}$, define the real linear forms 
 $\ell_k = \phi_k^*Q\phi_k^{\;}$ in the variables $x_{jk}, y_{jk}$ for $ 1\leq j \leq k \leq 4$.  
Rephrasing  \cite[Lemma 9]{BCMN:13}, a frame $\Phi$ defines injective measurements on $\C^4$ if 
and only if the system of equations
\begin{equation}\label{eq:eqs}
m_{11} = m_{12} = \hdots = m_{44} =  \ell_1 = \hdots = \ell_{n} = 0  
\end{equation}
has \emph{no} non-zero real solution $(x_{jk}, y_{jk} ) \in \R^{16}$.
 
Note that this system of equations may have non-real complex solutions $(x_{jk}, y_{jk}) \in \C^{16}$, which 
correspond to non-Hermitian matrices in $\C^{4\times 4}$. 
 The set of matrices of rank $\leq 2$ is a homogeneous variety of dimension 12 and degree 20 
 inside of $\C^{4\times 4}$ \cite[Prop.~12.2, Ex. 19.10]{Har:95}. This means that a generic 
 linear space of codimension 11 intersects this variety in 20 complex lines (all passing through the origin). 
 This is the case for the linear space $\mathcal{L}_{\Phi}$ defined by the frame $\Phi$ below, and 
 the twenty lines come in ten complex conjugate pairs.

 \section{A small injective frame} \label{sec:ex}
 
The frame $\Phi= (\phi_1, \hdots, \phi_{11})$, 
consisting of the rows of the matrix
\begin{equation}\label{eq:smallFrame}
\Phi^T = 
\left(
\begin{array}{cccc}
 1 & 0 & 0 & 0 \\
 0 & 1 & 0 & 0 \\
 0 & 0 & 1 & 0 \\
 0 & 0 & 0 & 1 \\
 1 & 9 i & -5-7 i & -6-7 i \\
 1 & 1-i & -5-2 i & -1-8 i \\
 1 & -2+4 i & -4-2 i & 3+8 i \\
 1 & -3+i & 1-8 i & 7-6 i \\
 1 & 3-3 i & -8+7 i & -6-2 i \\
 1 & -3+5 i & 5+6 i & 2 i \\
 1 & -3+8 i & 5-5 i & -6-4 i \\
\end{array}
\right),
\end{equation}
defines injective measurements on $\C^4$, and therefore provides a counterexample to Conjecture~2 of \cite{BCMN:13}. 
That is, for any $x\in \C^4$, the values $|\langle\phi_k, x \rangle|^2$ for $k=1, \hdots, 11$ uniquely determine the rank-one matrix $xx^*$.

Apart from the coordinate vectors, the vectors of $\Phi$ were chosen to have first coordinate 1 and otherwise found 
 by a random search. This matrix is by no means unique, as further discussed in Section~\ref{sec:semialg}.

\begin{Thm} \label{thm:main}
$\Phi$ in \eqref{eq:smallFrame} defines injective measurements. 
\end{Thm}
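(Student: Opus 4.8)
The plan is to prove that the polynomial system \eqref{eq:eqs} has no nonzero real solution, which by the reformulation of \cite[Lemma~9]{BCMN:13} is equivalent to injectivity. Since the system is homogeneous, I would first dehomogenize or otherwise reduce to a zero-dimensional problem: the linear forms $\ell_1, \hdots, \ell_{11}$ cut out the space $\mathcal{L}_\Phi$ of codimension $11$ inside $\C^{16}$, so $\mathcal{L}_\Phi$ is a $5$-dimensional linear subspace. Restricting the rank-$\leq 2$ equations $m_{jk}=0$ to $\mathcal{L}_\Phi$ gives a system whose projectivization is $0$-dimensional, and by the degree count quoted in the excerpt (the rank-$\leq 2$ variety has degree $20$) I expect exactly $20$ complex lines of solutions through the origin. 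The task is then to verify that none of these $20$ lines is real, equivalently that all $20$ points in $\P(\mathcal{L}_\Phi)$ are genuinely non-real and come in $10$ complex-conjugate pairs.

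First I would carry out the algebraic part: using exact rational (or Gaussian-integer) arithmetic, I would compute a Gr\"obner basis or an eliminant for the ideal generated by the $m_{jk}$ and $\ell_k$ after parametrizing $\mathcal{L}_\Phi$ by $5$ coordinates. The goal is to produce a univariate rational-univariate-representation: a squarefree polynomial $p(t)$ of degree $20$ together with rational parametrizations of the remaining coordinates, so that every solution line corresponds to a root of $p$. Exactness here is essential, because the whole argument is a \emph{certificate} and floating-point roots would not constitute a proof. Verifying that the ideal is genuinely zero-dimensional of degree $20$ (that the generic intersection behavior actually holds for this specific $\Phi$, rather than a positive-dimensional or higher-degree degeneration) is itself something I would confirm from the Gr\"obner basis.

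The second, real-algebraic part is to certify that $p(t)$ has \emph{no} real root corresponding to a Hermitian matrix — or more precisely that none of the $20$ complex solution lines is fixed by complex conjugation. The clean way is to show $p$ has no real roots at all by a Sturm sequence or, better, by exhibiting a sum-of-squares / positivity certificate: since $p$ has degree $20$ with exactly $0$ real roots, I would try to write $p = \sigma_0 + \sigma_1\,q$ type identities (a Positivstellensatz certificate) or simply compute the Sturm sequence over $\Q$ and count sign changes at $\pm\infty$ to confirm the real root count is zero. Equivalently, one shows the $10$ conjugate pairs by checking that the solutions, when lifted back to $\C^{4\times 4}$, are non-Hermitian, i.e. that the imaginary parts $y_{jk}$ do not all vanish on any solution; a rational certificate that some $y_{jk}^2$ is bounded away from zero on the variety does this.

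The main obstacle will be the size and conditioning of the exact elimination: producing a degree-$20$ eliminant from $5$ variables with Gaussian-integer data can yield enormous coefficients, and I expect the computation to be delicate to keep exact and to render into a human- or machine-checkable certificate. The conceptual steps are routine, but the honest difficulty is guaranteeing correctness of a large symbolic computation — hence the emphasis in the excerpt on \emph{certificates}: rather than trusting the Gr\"obner engine, I would aim to output a certificate (an explicit rational parametrization plus a Sturm/SOS witness) that can be verified independently by direct substitution and sign evaluation over $\Q$.
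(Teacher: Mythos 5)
Your main route is essentially the paper's own proof: both reduce injectivity via \cite[Lemma~9]{BCMN:13} to showing that the homogeneous system \eqref{eq:eqs} has no nonzero real solution, both use exact Gr\"obner-basis elimination to produce a degree-$20$ univariate eliminant, and both certify the absence of real roots with a Sturm sequence. The paper's version skips the parametrization of $\mathcal{L}_{\Phi}$ and instead eliminates down to the two original variables $(x_{34},y_{34})$, obtaining a homogeneous bivariate polynomial $f$ of degree $20$; it then dehomogenizes at $y_{34}=1$ for the Sturm computation, and disposes of the locus missed by that chart (the $y_{34}=0$ fiber --- the analogue of your chart at infinity) with explicit Nullstellensatz certificates writing $1$ in the relevant ideals, one coordinate at a time. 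That certificate step is the concrete realization of the coverage argument you would need in order to make "every solution line corresponds to a root of $p$" rigorous; your proposed verification that the projective ideal is genuinely zero-dimensional of degree $20$ can serve the same purpose, but it must actually be wired into the argument rather than asserted alongside it.

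One genuine error to flag: your closing "equivalently" is not an equivalence. A solution of \eqref{eq:eqs} in $\C^{16}$ corresponds to a Hermitian matrix if and only if all sixteen coordinates $x_{jk}, y_{jk}$ are \emph{real numbers}; the $y_{jk}$ are the imaginary parts of the entries of $Q$ only when the coordinates themselves are real, so "non-Hermitian" does not mean "some $y_{jk}\neq 0$." A Hermitian rank-$2$ matrix in $\mathcal{L}_{\Phi}$ would typically have every $y_{jk}$ nonzero, and conversely the actual non-Hermitian solutions for this frame have $y_{34}=1$. Consequently, a rational certificate that some $y_{jk}^2$ is bounded away from zero on the variety would certify nothing about injectivity, and pursuing that route would fail. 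The correct criterion is the one you state just before it: no solution line is fixed by coordinatewise complex conjugation, equivalently the eliminant has no real roots --- which is exactly what the Sturm-sequence computation delivers.
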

\begin{proof}

Using the set-up of Section~\ref{sec:translate}, it suffices to show the equations \eqref{eq:eqs}
have no non-zero real solution.

Using computer algebra software such as {\tt Macaulay2} \cite{M2} or {\tt Mathematica} \cite{Mathematica}, 
one can compute a Gr\"obner basis of the set of polynomials $\{m_{11}, \hdots, m_{44}, \ell_1, \hdots, \ell_{11}\}$ 
and eliminate all of the variables except two from the system of equations~\eqref{eq:eqs}. 
See \cite[Ch. 3]{IVAbook} for background on Gr\"obner bases and elimination. 
The result is a polynomial $f(x_{34},y_{34})\in \Q[x_{34},y_{34}]$  (shown in Figure~1) with the property that $f(x_{34},y_{34})=0$
if and only if the point $(x_{34},y_{34})\in \C^2$ can be extended to a solution 
$(x_{11} \hdots, x_{44}, y_{12}, \hdots, y_{34})\in \C^{16}$
of the system of equations \eqref{eq:eqs}.  
This is the minimal polynomial in $\Q[x_{34},y_{34}]$ that can be written as 
\[f(x_{34},y_{34})  \ \ = \ \  \sum_{j,k} p_{jk}m_{jk} + \sum_k q_k \ell_k \]
with polynomials $p_{jk} , q_k \in \Q[i][x_{jk}, y_{jk} : 1\leq j, k\leq 4]$.

\begin{figure*} \label{fig:poly}
\begin{center}
{\scriptsize
\begin{tabular}{l}
$f(x,y)=$ \vspace{.05cm}\\
$47599685697454466246329412358483179722150043354437125082025800902606928597206272254845887202098485215232\cdot x^{20}$ \vspace{.05cm} \\
$-940875789867758769838520754403201268675774719194241940388656177785644194342166892793123967870118511091712\cdot x^{19}y$ \vspace{.05cm}\\
$+8079760677210192071804090111142610477024725441627364213141746522285905327070793719538623768982021441867008\cdot x^{18}y^2$ \vspace{.05cm}\\
$-40390761193855122277381198616744763479497680895608897593386520810794749041801633796968256299345250567989120\cdot x^{17}y^3$ \vspace{.05cm}\\
$+131616369916171208334977339064503371859576391268929064468118935900017365295185627042078382592920359023963120\cdot x^{16}y^4$\vspace{.05cm} \\
$-293014395329583025877260372789628942263338515685834588963896339613217690953560112063134591204469166903730584\cdot x^{15}y^5$\vspace{.05cm} \\
$+458069738032730695996144135248791338007569710877529938378092745783077549558976157025550745961972225340079644\cdot x^{14}y^6$\vspace{.05cm} \\
$-517369071593627219847520943924454458561147451524495675098907021370976281217299640311489465704692368615264514\cdot x^{13}y^7$ \vspace{.05cm}\\
$+452598979230255288442671627934707378002747893014717388494818021654528875197345624154508626114037972901500688\cdot x^{12}y^8$\vspace{.05cm} \\
$-372648962908998912506284086331829334659704158038572388762607081397540397891875288020327841800275807896331363\cdot x^{11}y^9$\vspace{.05cm} \\
$+368232864821580663608362507224731842224816948166375792251958189898413349943059199991850745920857587346422247\cdot x^{10}y^{10}$ \vspace{.05cm}\\
$-403635711731885683831862286003879871368285836090576953930238823174701111263082513174328319091824845878408842\cdot x^9y^{11}$ \vspace{.05cm}\\
$+390921191544945060106454097348764080175218877410156079207976994796588444804574583525852046116133406063492232\cdot x^8y^{12}$\vspace{.05cm} \\
$-303282246743535677380017745889681371136540419380112690433239947491979764226862379182777142974211242201436038\cdot x^7y^{13}$ \vspace{.05cm}\\
$+184479380320049045197686505443823960153384609428987780432573005109397657926440688558298683493092343685387706\cdot x^6y^{14}$ \vspace{.05cm}\\
$-87485311349460982824448992498046043498427396179321650198242819939653352363165057564278033789500273373973662\cdot x^5y^{15}$\vspace{.05cm} \\ 
$+32016520763724676437134174594818955536984857769461915546273804322365856693290090903851788729777275040411744\cdot x^4y^{16}$\vspace{.05cm} \\ 
$-8843043103455739360596137302837349740785483274132912552686735695145524362028265118639059872092039716064999\cdot x^3y^{17}$ \vspace{.05cm}\\ 
$+1775125426181341100587099980276312627299716879819457817398603067248151810981307579223879621865024794510283\cdot x^2y^{18}$ \vspace{.05cm}\\ 
$-241527118652311488433038772168913074025991214453188628647589057033246072076996489577531666185336332308462\cdot x y^{19}$ \vspace{.05cm}\\ 
$+17892217832720483440399845902831090202434763229104212220658085110841220106091148070445766234106381722000 \cdot y^{20}$
   \end{tabular}}
\caption{The minimal polynomial $f$ that vanishes at all points $(x,y) \in \C^2$ for which there exists a rank-2 matrix $Q$ in 
$\mathcal{L}_{\Phi}$ with $Q_{3,4} = x+iy$ and $Q_{4,3} = x-iy$.  As discussed in the proof of Theorem~\ref{thm:main}, the only solution to 
$f(x,y)=0$ in $\R^2$ is $(x,y)=(0,0)$.}
\end{center} 
\end{figure*}

Such a certificate verifies that $f(x_{34},y_{34}) =0 $ for all solutions 
to \eqref{eq:eqs}. Unfortunately the polynomial multipliers $p_{jk}$, $q_k$ involved
are too large to reproduce here. 

We will examine solutions to \eqref{eq:eqs} via the solutions of $f(x_{34},y_{34})=0$.
Like the original system \eqref{eq:eqs}, the solution set to 
$f(x_{34},y_{34})=0$ is invariant under scaling. That is, $f(x_{34},y_{34})=0$ implies that 
$f(\lambda x_{34},\lambda y_{34})=0$ for all scalars $\lambda\in \C$. 
In particular, $f(x_{34},y_{34})=0$ has a \emph{real} solution $(x_{34},y_{34})\in \R^2$ with $y_{34}\neq 0$ if and only if 
it has a real solution with $y_{34}=1$. 
However, using Sturm sequences \cite[\S 2.2.2]{BPRbook}, \cite{FrankM2}, one can verify that the univariate polynomial 
$f(x_{34}, 1) \in \Q[x_{34}]$ has no real roots.  This shows that the system of equations
\eqref{eq:eqs} has no real solutions with $y_{34}\neq 0$. 

One can also verify that there is no non-zero solution to $\eqref{eq:eqs}$ in $\C^{16}$ with $y_{34}=0$ as follows.
Because the solution set of $\eqref{eq:eqs}$ is invariant under scaling, there is a non-zero solution to \eqref{eq:eqs}
if and only if there is a solution with some coordinate equal to 1. This can be checked one coordinate at a time. 
For example, computing a Gr\"obner basis of the set of polynomials
$\{x_{12}-1, y_{34}, m_{11}, \hdots, m_{44}, \ell_1, \hdots, \ell_{11}\}$ 
reveals that 
\[1  \  = \ r\cdot (x_{12}-1) + s\cdot y_{34} + \sum_{j,k} p_{jk}m_{jk} + \sum_k q_k \ell_k \]
for some $r,s, p_{jk} , q_k \in \Q[i][x_{jk}, y_{jk} : 1\leq j,k\leq 4]$. This certifies that 
there is no solution in $\C^{16}$ to \eqref{eq:eqs} with $x_{12}=1$ and $y_{34}=0$. 
Repeating this process with the other variables in place of $x_{12}$, one can certify that 
there is no non-zero solution to $\eqref{eq:eqs}$ in $\C^{16}$ with $y_{34}=0$. 

These computations complete the certification that there are no non-zero solutions 
to the system of equations \eqref{eq:eqs}. Thus there are no non-zero Hermitian matrices 
of rank $\leq 2$ in the linear space $\mathcal{L}_{\Phi}$ and, by  \cite[Lemma~9]{BCMN:13},
the frame $\Phi$ defines injective measurements. 

The code for these computations in both {\tt Macaulay2} and {\tt Mathematica} 
are available at \url{http://www4.ncsu.edu/~clvinzan/smallFrame.html}.
\end{proof}

Solving the system of equations \eqref{eq:eqs} numerically, we see that, up to scaling, there are 
exactly twenty rank-2 matrices in the linear space $\mathcal{L}_{\Phi}$. These are in 
one-to-one correspondence with the twenty complex roots of the polynomial $f(x_{34},1)$. 
As none of these roots are real, none of the rank-2 matrices in $\mathcal{L}_{\Phi}$ are Hermitian. 
For example, the solution $(x_{34},y_{34}) \approx (1.95 + 2.08 i, 1) $ corresponds to
the rank-2 matrix
\[{\scriptsize
\begin{pmatrix}
\! 0 \!\!\!& \!\!\!\!-12.84-22.02 i \!\!\!& \!\!-27.63-6.1 i \!\!\!& \!\!26.67 -31.13 i  \\
\! 30.12 -34.42 i \!\!\!& \!\!\!\! 0 \!\!\!& \!\!-3.48+4.16 i \!\!\!& \!\!\!1.24 -1.93 i   \\
\! 17.86 -16.81 i \!\!\!& \!\!\!\! 2.62 +0.13 i \!\!\!& \!\!0 \!\!\!& \!\!\!1.95 +3.08 i \\
 \! -15.06-15.68 i \!\!\!& \!\!\!\! 0.57 -0.37 i \!\!\!& \!\!1.95 +1.08 i \!\!\!& \!\!\!0 \\
\end{pmatrix}
}\]
and its conjugate, $(x_{34},y_{34}) \approx (1.95 - 2.08 i, 1) $, gives
\[{\scriptsize
\begin{pmatrix}
\! 0 \!\!\! & \!\! 30.12 +34.42 i \!\!\! & \!\!17.86 +16.81 i \!\!\! & \!\! -15.06+15.68 i \\
\! -12.84+22.02 i \!\!\! & \!\! 0 \!\!\! & \!\! 2.62 -0.13 i \!\!\! & \!\! 0.57 +0.37 i \\
\! -27.63+6.1 i \!\!\! & \!\! -3.48-4.16 i \!\!\! & \!\! 0 \!\!\! & \!\! 1.95 -1.08 i \\
\! 26.67 +31.13 i \!\!\! & \!\! 1.24 +1.93 i \!\!\! & \!\! 1.95 -3.08 i \!\!\! & \!\! 0 \\
 \end{pmatrix} \!\!.
}\]

Because the certificates used in the proof of Theorem~\ref{thm:main}
are too large to give here, we now present a much smaller 
example of the computations involved.

\begin{Example}\label{ex:tiny}
Suppose we want to show that there is no rank-one Hermitian matrix of the form
\[
\begin{pmatrix}
z & x+i y \\  x- i y & z
\end{pmatrix}
\]
satisfying the linear equation $\ell = x+y - 2z =0$.
This is equivalent to showing that there is no non-zero real solution 
to the equations $m=\ell=0$, where 
$m$ denotes the determinant $m=z^2 - x^2 - y^2$ of this matrix. 

While this problem could easily be solved by hand, 
we will follow the proof of Theorem~\ref{thm:main}. 
Computing a Gr\"obner basis of the polynomials $\{m, \ell\}$ 
to eliminate the variable $z$, we find the minimal 
polynomial, 
\[f(x,y) = 3x^2 - 2 x y + 3 y^2 = -4m - (x+y+2z)\ell,\]
in $\Q[x,y]$ that vanishes on all the points $(x,y,z)\in \C^3$ 
satisfying $m(x,y,z)=\ell(x,y,z)=0$.

\begin{figure} \begin{center}
\includegraphics[width=2.3in]{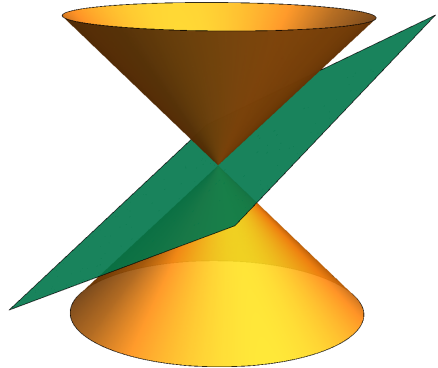}
\end{center}
\caption{The real solutions of $m=0$ and $\ell=0$  from Example~\ref{ex:tiny}.}
\end{figure}

The univariate polynomial $f(x,1) = 3x^2 - 2 x + 3$ has no real roots, 
which in this case can be verified directly. 
Because the solution set to $m=\ell=0$ is invariant under scaling, this 
implies that there is no real solution
of $m=\ell=0$ with $y\neq 0$. 

To check that there are no non-zero solutions with $y=0$, 
we first check for solutions with $x=1$ and then with $z=1$. 
Computing a Gr\"obner basis of the set $\{y,x-1,\ell, m\}$ reveals the expression 
\[1 =  \frac{(2x - 3y)}{3} y-(x+1) (x - 1)  -\frac{(x+y + 2 z)}{3} \ell - \frac{4 }{3}m .\]
This certifies that the there is no point $(x,y,z)$ satisfying $y=x-1=\ell=m=0$. 
(Plugging in such a solution to this equation would result in $1=0$.)
Next we compute a Gr\"obner basis of $\{y,z-1,\ell, m\}$ and see that
\[1  \ = \   \frac{2x}{3} y-(z+1) (z - 1)  - \frac{(x +y+ 2 z)}{3} \ell - \frac{1}{3}m,\]
which proves that the there is no solution to the equations $y=z-1=\ell=m=0$. 
Together these show that there is no non-zero solution of $\ell=m=y=0$. 

Hence the only real solution to $m=\ell=0$ is $(x,y,z)=(0,0,0)$. Indeed, we see in this small example that
 the solution set to $m=\ell=0$ is the union of two complex conjugate lines in $\C^3$, 
spanned by the rays $(2 \pm i\sqrt{2}, 2 \mp i \sqrt{2}, 2)$, whose only real point 
is the origin. 
\end{Example}

 \section{The set of injective frames} \label{sec:semialg}
 
 The example \eqref{eq:smallFrame} found above is not unique. 
 In fact, the set of injective frames is full dimensional in $\C^{4\times 11}$. 
 
As discussed in \cite[Remark~4.4]{CEHV:14}, the collection 
of $(U,V)\in \R^{4\times 11} \times \R^{4\times 11}$ for which the frame $U+iV$  
does \emph{not} define injective measurements is a closed semi-algebraic set. Its complement, the set of 
 $(U,V)$ for which the frame $U+iV$ does define injective measurements, 
 is therefore an open semi-algebraic set in $\R^{4\times 11} \times \R^{4\times 11}$.
 
 To see this, consider the system of equations \eqref{eq:eqs} with the entries of the
 frame $\Phi$ playing the role of parameters in the real linear forms
 $\ell_k = \phi^*_kQ\phi^{\;}_k$.  
  If, for a given $\Phi$ such as \eqref{eq:smallFrame},
 the system of equations \eqref{eq:eqs} has no non-zero real solutions, then 
 for any sufficiently small perturbation of $\Phi$, the perturbed system of equations 
 will also have no non-zero real solutions. Thus the set of injective frames 
 contains a small open ball around $\Phi$.
 
 For example, we can replace the last vector of $\Phi$, 
 \[{\phi_{11}} = \begin{pmatrix} 1 & -3+8 i & 5-5 i & -6-4 i \end{pmatrix}^T\]
with a parametrized vector 
\[{\phi_{11}'} = \begin{pmatrix} 1 & -3+8 i & 5-5 i & a+b i \end{pmatrix}^T\]
to obtain a new frame $\Phi'_{a,b}$. This adds parameters to the system of equations \eqref{eq:eqs} by replacing $\ell_{11}$ 
with 
 {\small
\begin{align*} 
\ell_{11}' &= \phi_{11}'^*Q \phi_{11}'= \ (a^2+b^2)x_{44}+2ax_{14}-(6a-16b)x_{24}\\
&+10(a-b)x_{34}-2by_{14}+(16a+6b)y_{24}-10(a+b)y_{34}\\
&+x_{11}-6x_{12}+10x_{13}+73x_{22}-110x_{23}+50x_{33}\\
&-16y_{12}+10y_{13}+50y_{23}.
 \end{align*}
} 
The resulting system of equations has no non-zero real solutions 
for an open subset of $(a,b)\in \R^2$. 
In particular, this includes an open ball around the point $(a,b)=(-6,-4)$.
These points all correspond to frames that define injective measurements on $\C^4$. 

Computing the exact set of $(a,b)\in \R^2$
for which the new frame $\Phi'_{a,b}$ is injective is possible in theory but 
prohibitively time consuming in practice. However, by numerically testing points in a 
$0.1\times 0.1$ grid around the point $(a,b)=(-6,-4)$, we 
can get rough local picture of the open semi-algebraic set of $(a,b)$ for which the 
frame $\Phi'_{a,b}$ defines injective measurements. The result is shown in 
Figure 3.
\pagebreak

\textbf{Acknowledgements.} Thanks to Bernhard Bodmann for his encouragement and
interest in this problem. The author was supported by an NSF postdoc DMS-1204447.

\begin{figure}\begin{center}
\includegraphics[width=2.3in]{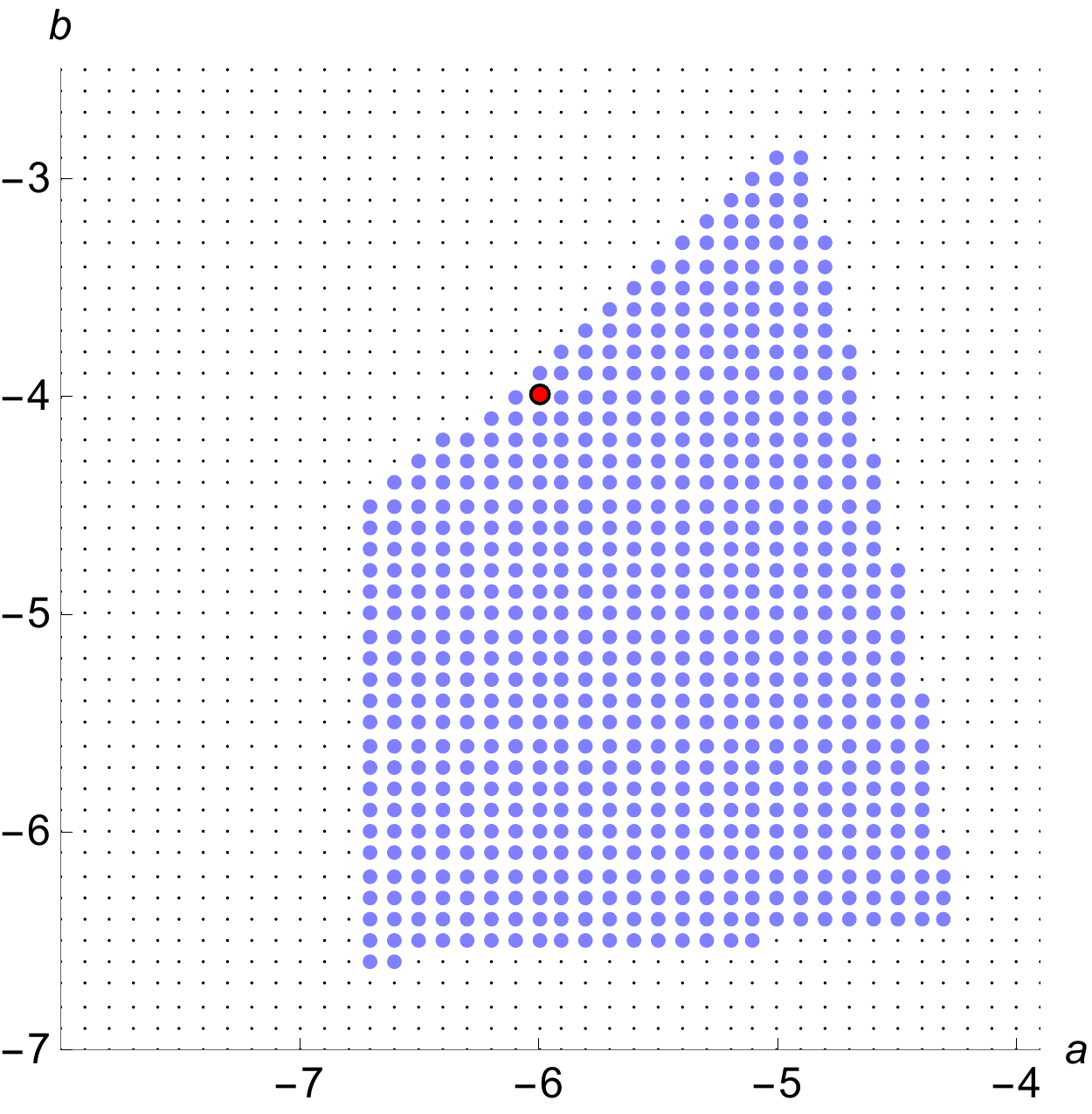}
\end{center} \caption{A sampling of injective frames (blue) around $\Phi$ (red).}
\end{figure}

\bibliographystyle{IEEEtran}
\bibliography{FrameBib}

\end{document}